\titleformat{\section}{\large\bfseries}{\thesection.}{1em}{}
\newtheorem{theorem}{Theorem}[section]
\newtheorem{lemma}[theorem]{Lemma}
\begin{document}

\title{Degree of Regularity of Linear Homogeneous Equations}
\author{Kavish Gandhi\thanks{MIT-PRIMES, Department of Mathematics, Massachusetts Institute of Technology.}, Noah Golowich$^*$ and L\'aszl\'o Mikl\'os Lov\'asz\thanks{Department of Mathematics,
    Massachusetts Institute of Technology,
    Cambridge, MA 02139-4307.
    Email: {\tt lmlovasz@math.mit.edu}.}}
\date{\today}
\maketitle

\begin{abstract}
We define a linear homogeneous equation to be \emph{strongly $r$-regular} if, when a finite number of inequalities is added to the equation, the system of the equation and inequalities is still $r$-regular.  In this paper, we show that, if a linear homogeneous equation is $r$-regular, then it is strongly $r$-regular. In 2009, Alexeev and Tsimerman introduced a family of equations, each of which is $(n-1)$-regular but not $n$-regular, verifying a conjecture of Rado from 1933. These equations are actually strongly $(n-1)$-regular as an immediate corollary of our results.  
\end{abstract}

\section{Introduction}
\label{sec:intro}
In 1927, van der Waerden \cite{waerden_beweis_1927} proved his seminal theorem stating that, for any finite coloring of the positive integers, there always exists a monochromatic arithmetic progression of arbitrary length. Subsequently, in 1933, Rado \cite{rado_studien_1933} expanded on this theorem, finding a necessary and sufficient condition for the partition regularity of systems of linear homogeneous equations. In the case of a single linear homogeneous equation of the form
\begin{equation}
\label{eq:initialeq}
a_1x_1 + a_2x_2 +a_3x_3 + \cdots + a_nx_n = 0 \quad : \quad a_i \ne 0, \ a_i \in \mathbb{Z},
\end{equation}
given any finite coloring of the integers, Rado proved that there exist positive monochromatic integers $(x_1, x_2, \ldots, x_n)$ that satisfy the equation if and only if a nonempty subset of $\{a_1, a_2, \ldots, a_n\}$ sums to 0. An equation for which there exists a monochromatic solution given any finite coloring is defined as {\it regular}.

Not all linear homogeneous equations are regular, however. Those that are not regular are classified as follows: given a positive integer $r$, a linear homogeneous equation is called {\it $r$-regular} if, for every coloring of the positive integers with $r$ colors, there always exists a monochromatic solution $x_1, x_2, \ldots, x_n$ to the equation. The {\it degree of regularity} of a linear homogeneous equation is defined as the largest positive integer $r$ such that the equation is $r$-regular.

Rado conjectured in 1933 \cite{rado_studien_1933} that for every positive integer $n$, there exists a linear homogeneous equation with degree of regularity equal to $n$. This conjecture was open for a long time until it was proven in 2009 by Alexeev and Tsimerman \cite{alexeev_equations_2010}. Specifically, they proved that for each $n$ the equation
\begin{equation}
\label{eq:alexeev}
 \left(1 - \displaystyle\sum\limits_{i = 1}^{n-1} \frac{2^i}{2^i - 1}\right)x_1 + \displaystyle\sum\limits_{i = 1}^{n-1} \frac{2^i}{2^i-1}x_{i+1}  = 0
\end{equation}
is $(n-1)$-regular but not $n$-regular.  To show that these equations are $(n-1)$-regular, they noted that there must be an $i$, $0 < i < n$, such that $x$ and $2^i x$ are the same color. Otherwise, the $n$ integers $x, 2x, 4x, \ldots 2^{n-1}x$ would all be  different colors, which is impossible in an $(n-1)$-coloring. Alexeev and Tsimerman then noted that the following is a monochromatic solution to $(\ref{eq:alexeev})$:
\begin{eqnarray}
x_1 = x_2 = \cdots = x_{i} = x_{i+2} = \cdots = x_n &=& 2^ix\nonumber\\
x_{i+1} &=& x\nonumber.
\end{eqnarray}

However, this solution is degenerate in that all but one of $x_1, \ldots, x_n$ are equal to one another and are thus guaranteed to be the same color for any coloring of the positive integers.  This observation raises the question: can we still find a monochromatic solution to the equation $(\ref{eq:alexeev})$ if we restrict the $x_1, \ldots, x_n$ such that they cannot be equal?  More generally, if we add a finite number $k$ of inequalities of the form 
\begin{equation}
\label{eq:initialineq}
A_{j,1}x_1 + \cdots + A_{j,n}x_n \ne 0 \quad : \quad 1 \le j \le k, \mbox{  } A_{j,1}, \ldots, A_{j,n} \in \mathbb{Z},
\end{equation}
is the degree of regularity affected? We call an equation {\it strongly $r$-regular} if, when any inequalities of the form $(\ref{eq:initialineq})$ that are not multiples of the equation are specified, the system of the equation and the inequalities is $r$-regular. We note that none of these inequalities can be a multiple of the original equation, as then the system has no solution. Adding inequalities is equivalent to deleting a finite number of hyperplanes from the set of possible solutions.


In this paper, we show that if an equation is $r$-regular, then it is strongly $r$-regular. Therefore, the equation $(\ref{eq:alexeev})$ is strongly $(n-1)$-regular, answering the question discussed above. It follows from our main theorem as another corollary that a set of equations Fox and Radoi\v ci\'c proved to be $2$-regular but not 3-regular \cite{fox_axiom_2005} is strongly 2-regular. Thus, the degree of regularity of these equations is also not affected by the addition of inequalities. 

The same question of adding inequalities in the form of $(\ref{eq:initialineq})$ and determining whether partition regularity is affected can be asked for regular equations. In 1998, Hindman and Leader \cite{hindman_partition_1998} resolved this question. It follows easily from Theorem 2 of \cite{hindman_partition_1998} that for any regular linear homogeneous equation, if any finite set of inequalities $(\ref{eq:initialineq})$ is specified, the system of the equation and inequalities is still regular.

The organization of this paper is as follows: In Section $\ref{sec:mainthm}$ we prove an important lemma and then our main result. In Section $\ref{sec:implications}$, we discuss some implications and extensions of our result.

\section{Proof of strong $r$-regularity}
\label{sec:mainthm}
The lemma below is very similar to Theorem 8 in Chapter 3 of Graham, Rothschild, \& Spencer \cite{graham_ramsey_1990}, and the proof is almost identical. We present it here for completeness. This lemma allows us to find a set of monochromatic progressions $P_1, P_2,\ldots, P_n$ of the same color, such that there are many choices of $x_i \in P_i$ with $(x_1,x_2,\ldots,x_n)$ a monochromatic solution to a linear homogeneous equation.
\begin{lemma} 
\label{thm:imptlemma}
Let 
\begin{equation}
\label{eq:lheqinlemma}
a_1x_1 + \cdots + a_nx_n = 0
\end{equation}
be a linear homogeneous equation which is $r$-regular and $C$ be a positive constant.  Then, if $\mathbb{N}$ is finitely colored with $r$ colors, there exist $x_1, \ldots, x_n$ satisfying $(\ref{eq:lheqinlemma})$ and $d>0$ so that all
\begin{equation}
\label{eq:monoseq}
x_i + \lambda d \quad : \quad 1\le i \le n, |\lambda| \le C
\end{equation}
are the same color.
\end{lemma}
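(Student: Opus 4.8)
\medskip
\noindent\textit{Proof strategy (proposal).}
The plan is to combine the $r$-regularity of the equation with van der Waerden's theorem by way of a compactness argument, in the spirit of the cited result of Graham--Rothschild--Spencer. The first step I would take is a standard compactness reduction: since $(\ref{eq:lheqinlemma})$ is $r$-regular, there is a finite $N$ such that \emph{every} $r$-coloring of $\{1,\dots,N\}$ already contains a monochromatic solution with all coordinates in $\{1,\dots,N\}$; because the equation is homogeneous, the same $N$ works verbatim for every dilated copy $\{t,2t,\dots,Nt\}$ (apply the statement to the coloring $m\mapsto \chi(tm)$). It therefore suffices, given an $r$-coloring $\chi$ of $\mathbb{N}$, to produce a dilation factor $s$, a difference $d>0$, and a solution $(y_1,\dots,y_n)\in\{1,\dots,N\}^n$ of $(\ref{eq:lheqinlemma})$ for which all of the values $s y_i+\lambda d$ ($1\le i\le n$, $|\lambda|\le C$) receive the same $\chi$-color; putting $x_i=s y_i$ then yields the lemma.

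Next I would use van der Waerden's theorem to create the ``room'' for the length-$(2C+1)$ progressions around each coordinate. Applied to $\chi$ — and iterated, exploiting that van der Waerden's theorem allows arbitrarily many colors, so that the usual ``color a block by its pattern'' blow-up is harmless for this part — one obtains long monochromatic arithmetic progressions, and more precisely monochromatic configurations rich enough that each of finitely many prescribed positions can be flanked by a monochromatic progression of a common difference $d$, all of one color. One then wants the centers of $n$ such progressions to sit at $sy_1,\dots,sy_n$ for some solution $(y_i)$ of the equation. I would carry this out by a color-focusing argument of van der Waerden type: one maintains families of partially fattened configurations whose centers already form a solution of $(\ref{eq:lheqinlemma})$ but whose colors are pairwise distinct, and since only $r$ colors are available the process is forced, after boundedly many steps, into a fully fattened monochromatic configuration. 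The $r$-regularity hypothesis itself enters at exactly the point where, inside an already monochromatic ``environment,'' one needs the centers to honestly satisfy the equation: there one invokes the finite $r$-regularity extracted in the first step, applied to an $r$-coloring read off from that environment.

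The step I expect to be the main obstacle is reconciling two structures that pull against each other: the progression requirement is translation-based, whereas the solution set of a homogeneous equation is invariant under dilation but \emph{not} under translation unless $a_1+\dots+a_n=0$. Consequently a monochromatic progression handed to us by van der Waerden's theorem does not automatically contain a correctly placed scaled copy of a solution, and one must arrange the divisibilities and the sizes so that a genuine \emph{positive} solution lands precisely where the surrounding progressions are monochromatic — all the while never introducing more than $r$ colors, since anything past $r$ colors leaves the scope of the hypothesis (and indeed the conclusion genuinely fails with more colors, e.g.\ for equation $(\ref{eq:alexeev})$). When $a_1+\dots+a_n=0$ the equation is translation invariant (in fact regular), and this difficulty disappears, the configuration being producible directly from van der Waerden's theorem or its multidimensional form; the general case is where the bookkeeping must be done with care, and that is the technical heart of the lemma.
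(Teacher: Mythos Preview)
Your compactness reduction to a finite $N$ (the paper calls it $R$) is exactly right, and you correctly isolate the real difficulty: the solution set of $(\ref{eq:lheqinlemma})$ is dilation-invariant but not translation-invariant, so a monochromatic progression from van der Waerden does not by itself carry a well-placed solution. But your proposed resolution --- a van der Waerden--style color-focusing scheme maintaining ``partially fattened'' configurations whose centers already solve the equation --- is left as a sketch, and it is not clear it can be made to close: focusing arguments build their structures additively, so at each step you would again need a solution of $(\ref{eq:lheqinlemma})$ to sit inside a translated environment, which is precisely the obstruction you flagged. Your remark about ``never introducing more than $r$ colors'' also points the wrong way: the only place the $r$-color hypothesis must be respected is the moment you invoke $r$-regularity, not in auxiliary applications of van der Waerden.

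The paper dissolves the tension with a single device that you almost name but do not deploy in the right form. You mention the ``color a block by its pattern'' blow-up; the paper uses its \emph{multiplicative} analogue. Define an $r^{R}$-coloring $\omega'$ by declaring $\omega'(\alpha)=\omega'(\beta)$ iff $\omega(i\alpha)=\omega(i\beta)$ for all $1\le i\le R$. One application of van der Waerden to $\omega'$ yields a $\omega'$-monochromatic progression $P=\{a+\lambda d:|\lambda|\le K\}$ with $K=CR^{n-1}$; by construction every dilate $iP$, $1\le i\le R$, is $\omega$-monochromatic. Now apply the genuine $r$-regularity to the $\omega$-coloring of $\{a,2a,\dots,Ra\}$ to get a monochromatic solution $(ay_1,\dots,ay_n)$, set $x_i=ay_i$ and $d'=d\cdot\operatorname{lcm}(y_1,\dots,y_n)$, and observe that $x_i+\lambda d'=y_i\bigl(a+\lambda d\,\operatorname{lcm}(y_j)/y_i\bigr)\in y_iP$ whenever $|\lambda|\le C$. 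The multiplicative pattern coloring is exactly what reconciles the additive $+\lambda d$ with the dilations $y_i\cdot$, so no focusing is needed.
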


\begin{proof} 
By the Compactness principle, we can find a constant $R$ such that for any $r$-coloring of $[1, R]$, there exists a monochromatic solution to $(\ref{eq:lheqinlemma})$.  Now, let $\omega$ be an $r$-coloring of $\mathbb{N}$.  We define an $r^{R}$ coloring on $\mathbb{N}$, namely $\omega'$, by 
\[\omega'(\alpha) = \omega'(\beta) \ \ \ \ \text{iff} \ \ \ \ \omega(\alpha i) = \omega(\beta i) \mbox{ for } 1 \le i \le R. \]
We define $K = CR^{n-1}$. Within our coloring $\omega'$, we find a monochromatic arithmetic progression $P$ of length $2K + 1$ of the form 
\begin{equation}
\label{eq:othermonoseq}
a + \lambda d \quad : \quad |\lambda| \le K.
\end{equation}
Note that for the original coloring $\omega$, this means that $P$, $2\cdot P$,\ldots, $R\cdot P$ are all monochromatic.  Since $(\ref{eq:lheqinlemma})$ is homogeneous, the $r$-coloring $\omega$ of $\{a, 2a, \ldots, Ra\}$ yields a monochromatic solution, $ay_1, \ldots, ay_n$.  

Now we set $x_i = ay_i$ for $1 \le i \le n$. Moreover, we let $y$ be the least common multiple of $y_1, \ldots, y_n$ and $d' = d y$. Therefore,
\[x_i + \lambda d'  = ay_i + \lambda d y = y_i\left(a + \lambda d\left(\frac{y}{y_i}\right)\right).\]
Notice that $\lambda \le C$ and $\frac{y}{y_i} \le y_1\ldots y_{i-1} y_{i+1} \ldots y_n \le R^{n-1}$, so $\lambda y/ y_i \le K$.  

Thus, $a + \lambda d y/y_i$ belongs to $P$, which implies that 
\[\omega'\left(a + \lambda d \left(\frac{y}{y_i}\right)\right) = \omega'(a),\]
and thus, by our definition of $\omega'$,
\begin{equation}
\omega(x_i + \lambda d') = \omega\left(y_i(a+\lambda d\left(\frac{y}{y_i}\right))\right) =  \omega(ay_i) = \omega(x_i).\nonumber
\end{equation}
Since $\omega(x_i)$ is constant for $1 \leq i \leq n$, $\omega(x_i + \lambda d')$ is constant for $1 \leq i \leq n$ and $|\lambda| \leq C$.
\end{proof}

The below theorem is our main result. It uses Lemma $\ref{thm:imptlemma}$ to prove that all $r$-regular equations are strongly $r$-regular.

\begin{theorem}
\label{thm:major}
Assume that the equation
\begin{equation}
\label{eq:lheqn}
a_1x_1 + a_2x_2 + \cdots + a_nx_n = 0
\end{equation}
is $r$-regular. Then it is strongly $r$-regular.
\end{theorem}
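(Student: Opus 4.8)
The plan is to use Lemma~\ref{thm:imptlemma} to control not just the colors of the variables $x_i$, but also the colors of a whole arithmetic progression around each $x_i$, and then to perturb the $x_i$ within these progressions to escape the forbidden hyperplanes. Concretely, suppose we are given inequalities $A_{j,1}x_1 + \cdots + A_{j,n}x_n \ne 0$ for $1 \le j \le k$, none of which is a scalar multiple of (\ref{eq:lheqn}). I would apply Lemma~\ref{thm:imptlemma} with a constant $C$ to be chosen large enough in terms of $k$ (for instance $C = k$ suffices, or even $C=1$ with a cleverer count), obtaining $x_1,\dots,x_n$ satisfying (\ref{eq:lheqn}) together with a common difference $d>0$ such that every $x_i + \lambda d$ with $|\lambda|\le C$ has the same color.

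The key idea for the perturbation is this: pick a single index, say $i_0$ with $a_{i_0}\ne 0$, and instead of moving each variable independently, move the solution along a one-parameter family that stays on the solution hyperplane of (\ref{eq:lheqn}). Since (\ref{eq:lheqn}) is homogeneous, scaling $(x_1,\dots,x_n)$ by an integer $t$ gives another solution, but that changes all colors. Instead I would look for a shift vector $(v_1,\dots,v_n)$ with $a_1v_1+\cdots+a_nv_n=0$ and each $v_i$ a bounded multiple of $d$, so that $(x_1+v_1,\dots,x_n+v_n)$ is still monochromatic of the same color (by the conclusion of the Lemma, as long as $|v_i/d|\le C$) and still solves (\ref{eq:lheqn}). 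The simplest such vectors: fix two indices $p\ne q$ and set $v_p = a_q d$, $v_q = -a_p d$, all other $v_i=0$; then $\sum a_i v_i = 0$ automatically. Taking the new solution $(x_1,\dots,x_p + s a_q d,\dots,x_q - s a_p d,\dots,x_n)$ for integer parameter $s$ with $|s\,a_q|\le C$ and $|s\,a_p|\le C$ gives a family of genuinely distinct monochromatic solutions as $s$ varies, provided $a_p,a_q\ne 0$.

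Now for each inequality $j$, plugging this family into $A_{j,1}y_1+\cdots+A_{j,n}y_n$ yields an affine (degree $\le 1$) function of $s$: it equals $\big(A_{j,1}x_1+\cdots+A_{j,n}x_n\big) + s\,d\,(A_{j,p}a_q - A_{j,q}a_p)$. This function of $s$ has at most one root unless it is identically zero; and if one worries that it could be identically zero (both the constant term and the $s$-coefficient vanish) for some $j$, one escapes by using a different pair of indices or a different type of shift vector — and here the hypothesis that no inequality is a multiple of (\ref{eq:lheqn}) is exactly what guarantees that not all the coefficient combinations $A_{j,p}a_q - A_{j,q}a_p$ can vanish simultaneously across all pairs (otherwise the rows $A_j$ and $(a_1,\dots,a_n)$ would be proportional). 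So for each $j$ at most one value of $s$ is ruled out; choosing $C$ (hence the window of allowed $s$) larger than $k$ leaves an admissible integer $s$ for which all $k$ inequalities hold. The resulting $(y_1,\dots,y_n)$ is then a monochromatic solution of (\ref{eq:lheqn}) avoiding all the hyperplanes, which shows the system is $r$-regular, i.e.\ (\ref{eq:lheqn}) is strongly $r$-regular.

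The main obstacle I anticipate is the degenerate case analysis: handling equations where only one coefficient $a_i$ is nonzero (impossible here since $n\ge 2$ and regularity forces a zero-sum subset, but worth a remark), handling the possibility that a perturbation direction $(v_i)$ lands some inequality's affine function identically at zero, and making sure the family of perturbed solutions consists of distinct points so that "avoiding hyperplanes" is meaningful rather than vacuous. All of these are resolved by choosing the shift pair $(p,q)$ appropriately using the non-proportionality hypothesis and by taking $C$ sufficiently large relative to $k$; I would organize the proof so that $C$ is fixed at the very start as a function of $k$ and the coefficients, then Lemma~\ref{thm:imptlemma} is invoked once.
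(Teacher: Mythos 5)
Your overall strategy is the same as the paper's: invoke Lemma~\ref{thm:imptlemma} with a suitably large $C$, then perturb $(x_1,\dots,x_n)$ inside the monochromatic progressions by a vector $(\lambda_1 d,\dots,\lambda_n d)$ with $\sum_i a_i\lambda_i=0$, chosen to escape the $k$ forbidden hyperplanes. The gap is in your restriction to a \emph{one}-parameter family generated by a single elementary shift $v_{p,q}=a_q e_p-a_p e_q$. Your justification for handling the degenerate case --- that the minors $A_{j,p}a_q-A_{j,q}a_p$ cannot all vanish for a fixed $j$ --- only shows that \emph{for each} $j$ \emph{some} pair $(p,q)$ works; it does not produce a single pair that works for all $j$ simultaneously, and in general no such pair exists. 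Concretely, take $n=4$, $a=(1,1,1,-3)$, and suppose the lemma hands you the solution $x=(t,t,t,t)$. For each of the six pairs $(p,q)$ the subspace $\{x,v_{p,q}\}^{\perp}$ is two-dimensional and contains integer vectors $A$ not proportional to $a$ (e.g.\ $A=(0,0,3,-3)$ kills the pair $(1,2)$, $A=(3,-3,0,0)$ kills the pair $(3,4)$, and so on); with the corresponding six legitimate inequalities imposed, every elementary shift direction leaves at least one inequality identically violated along the whole line, so no choice of $s$ rescues you.

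The repair is exactly what the paper does: work with the full rank-$(n-1)$ lattice $L=\{\lambda\in\mathbb{Z}^n:\sum_i a_i\lambda_i=0\}$ rather than a single line in it. Each condition $\sum_i A_{j,i}\lambda_i=-\frac{1}{d}\sum_i A_{j,i}x_i$ cuts out a proper affine sublattice of $L$ (properness is where the non-proportionality hypothesis enters), and a finite union of proper affine sublattices cannot cover $L$; the paper even makes this effective by choosing $\lambda_1,\dots,\lambda_{n-1}$ from $\{a_n,2a_n,\dots,(k+1)a_n\}$ and solving for $\lambda_n$, which is what yields the explicit value of $C$. Equivalently, you could keep your one-parameter picture but allow the direction $v$ to be a general integer vector in $a^{\perp}$ (a finite union of proper subspaces $A_j^{\perp}\cap a^{\perp}$ cannot cover $a^{\perp}\otimes\mathbb{Q}$); either way you must leave the span of a single $v_{p,q}$. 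A secondary, minor point: the window of admissible $s$ has length about $2C/\max(|a_p|,|a_q|)$, so $C=k$ does not suffice in general; $C$ must be chosen proportional to $k$ times the size of the coefficients, as you correctly note at the end and as the paper's explicit formula for $C$ reflects.
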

\begin{proof}
Let
\begin{equation}
\label{eq:ineqs}
A_{j,1}x_1 + A_{j,2}x_2 + \cdots + A_{j,n}x_n \neq 0 \quad : \quad A_{j,i} \in \mathbb{Z}, 1 \leq j \leq k
\end{equation}
be any collection of $k$ inequalities, none of which are multiples of $(\ref{eq:lheqn})$.
Let $\omega : \mathbb{N} \rightarrow \{1, 2, \ldots, r\}$ be any $r$-coloring of the positive integers. We pick a positive integer $C$, to be specified later. By Lemma $\ref{thm:imptlemma}$, there exist $x_1, \ldots, x_n, d > 0$ such that $(x_1, \ldots, x_n)$ satisfy $(\ref{eq:lheqn})$ and so that, for $1 \leq i \leq n$, and $|\lambda| \leq C$, $\omega(x_i + \lambda d)$ is constant.

We now claim that there exists a sequence $(\lambda_1, \ldots, \lambda_n)$ such that $(x_1 + \lambda_1d, \ldots, x_n + \lambda_nd)$ satisfies both $(\ref{eq:lheqn})$ and $(\ref{eq:ineqs})$. To show this, note that we need
\begin{eqnarray}
\sum_{i = 1}^n a_i(x_i + \lambda_id) = 0\nonumber,
\end{eqnarray}
or, equivalently,
\begin{eqnarray}
\label{eq:necforeq}
\sum_{i = 1}^n a_i\lambda_i = 0,
\end{eqnarray}
since $\sum_{i = 1}^n a_ix_i = 0$. We also need, for each $j$ where $1 \leq j \leq k$,
\begin{equation}
\sum_{i = 1}^n A_{j,i}(x_i + \lambda_id) \neq 0\nonumber,
\end{equation}
which is equivalent to
\begin{equation}
\label{eq:necforineqs}
\sum_{i = 1}^n A_{j,i}\lambda_i \neq - \frac{\sum_{i = 1}^n A_{j,i}x_i}{d}.
\end{equation}
If, for any $j$, the coefficients of $\lambda_i$ in $(\ref{eq:necforineqs})$ are each a multiple of the corresponding coefficients of $\lambda_i$ in $(\ref{eq:necforeq})$, then the $j^{th}$ inequality of $(\ref{eq:ineqs})$ would be a multiple of $(\ref{eq:lheqn})$, a contradiction. Now, we use the fact that if we have a linear equation with rational coefficients, and a finite set of linear inequalities such that no inequality is a multiple of the original equation, then we can find a solution that satisfies the equation and the inequalities. This gives us the needed set $(\lambda_1, \lambda_2, \ldots, \lambda_n)$. Notice that we may choose 
\begin{equation}
C = \left\lceil{\frac{(k+1)(|a_1| + \cdots + |a_n|)}{2}}\right\rceil,\nonumber
\end{equation} 
since it is possible to choose each of of $\lambda_1, \ldots, \lambda_{n-1}$ out of the set $\{a_n, 2a_n, \ldots (k+1)a_n\}$, meaning that $|\lambda_1|, \ldots, |\lambda_{n-1}| \leq (k+1)a_n$. Moreover,
\begin{equation}
|\lambda_n| = \left|\frac{a_1\lambda_1 + \cdots + a_n\lambda_n}{a_n}\right|\nonumber
\end{equation} is at most $(k+1)(|a_1| + \cdots + |a_{n-1}|)$. Therefore, twice the value of $C$ given above is greater than the absolute value of each of $\lambda_1, \ldots, \lambda_n$. This means that the $n$-tuplet $(x_1 + \lambda_1d, \ldots, x_n + \lambda_nd)$ is monochromatic and satisfies $(\ref{eq:lheqn})$ and $(\ref{eq:ineqs})$ for all $j$ values.
\end{proof}

\section{Implications}
\label{sec:implications}
As a result of our theorem, we resolve our motivational question: we can find monochromatic solutions to Alexeev and Tsimermaan's family of equations that are distinct, and more generally, that additionally satisfy a finite set of inequalities.  We can also generalize Alexeev and Tsimerman's method of proving that an equation is $r$-regular to a much larger class of equations. To do so, we make the following definitions:

We define an upper triangular $m$ by $m$ matrix
\begin{equation}\nonumber
C = \left[\begin{array}{ccccc}
c_{1,1}&c_{1,2}&c_{1,3}&\cdots&c_{1,m}\\
0&c_{2,2}&c_{2,3}&\cdots&c_{2,m}\\
0&0&c_{3,3}&\cdots&c_{3,m}\\
\vdots&\vdots&\vdots&\ddots&\vdots\\
0&0&0&\cdots&c_{m,m}
\end{array}\right]
\end{equation}
to have the {\it linkage property} if, for each integer $i$ where $1 \leq i \leq m-1$, the following holds for all $j$ where $i < j \leq m$:
\begin{equation}
c_{1,i}\cdot c_{i+1,j} = c_{1,j}\nonumber.
\end{equation}
We will only be considering matrices with the linkage property that have all elements positive.

Given an equation $(\ref{eq:lheqn})$, we let, for $1 \leq l \leq n$, 
\begin{eqnarray}
S_l &=& -\frac{\left(\sum_{i = 1}^{n} a_i\right) - a_l}{a_l}\nonumber.
\end{eqnarray}
Notice that for any coloring of the integers that excludes monochromatic solutions to a linear homogeneous equation, if $x$ is an integer, then $x$ and $S_lx$ cannot have the same color. Therefore, the number $S_l$ is otherwise known as a forbidden ratio\cite{alexeev_minimal_2007}.

We now have the following:
\begin{theorem}
\label{thm:mainthm2}
Assume that, given an equation $(\ref{eq:lheqn})$ and inequalities $(\ref{eq:ineqs})$, there exists an upper triangular $m$ by $m$ matrix with the linkage property where each nonzero entry is positive and equal to $S_l$ for some $l$ where $1 \leq l \leq n$. Then the system of the equation and the inequalities is strongly $m$-regular.
\end{theorem}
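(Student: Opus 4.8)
The plan is to mimic the Alexeev--Tsimerman argument in its full generality, using Theorem~\ref{thm:major} to dispose of the degeneracy issue that the inequalities $(\ref{eq:ineqs})$ create. Let $C = (c_{i,j})$ be the given $m\times m$ upper triangular matrix with the linkage property, all nonzero entries positive, and each $c_{i,j}$ equal to some forbidden ratio $S_l$. Suppose for contradiction that the system of $(\ref{eq:lheqn})$ and $(\ref{eq:ineqs})$ is not $m$-regular, i.e.\ there is an $m$-coloring $\omega$ of $\mathbb{N}$ with no nondegenerate monochromatic solution. First I would observe the chromatic consequence of the forbidden-ratio property in the form needed here: since each $c_{i,j} = S_l$ for some $l$, for every integer $x$ the integer $x$ and the integer $c_{i,j}x$ receive different colors under $\omega$ (because if they were equal, the degenerate tuple with $x_l = x$ and all other coordinates equal to $c_{i,j}x$ — wait, this is exactly the observation recorded just before the theorem, so I can cite it directly).

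Next comes the combinatorial heart of the argument: I would show that the linkage property forces a ``color collision'' along one of the rows of an associated chain. Fix any $x$ and consider the $m$ integers $x,\; c_{1,1}x,\; c_{1,2}x,\; \ldots,\; c_{1,m}x$. If these were pairwise distinctly colored we would have $m+1$ colors, impossible; hence there exist indices with a repeated color. The key point is to use the linkage identity $c_{1,i}c_{i+1,j} = c_{1,j}$ to convert a collision ``$c_{1,i}x$ and $c_{1,j}x$ have the same color'' (with $i < j$, reading $c_{1,0}x := x$) into the statement that $c_{1,i}x$ and $c_{i+1,j}\cdot(c_{1,i}x)$ have the same color, i.e.\ we have found an integer $z = c_{1,i}x$ and a forbidden ratio $c_{i+1,j} = S_l$ with $z$ and $S_l z$ the same color. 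That is precisely the configuration that yields a (possibly degenerate) monochromatic solution to $(\ref{eq:lheqn})$, exactly as in the Alexeev--Tsimerman construction: set $x_l = z$ and $x_t = S_l z$ for all $t \ne l$, and check using the definition of $S_l$ that $(\ref{eq:lheqn})$ is satisfied. So any $m$-coloring of $\mathbb{N}$ admits at least one monochromatic solution of $(\ref{eq:lheqn})$; in other words $(\ref{eq:lheqn})$ is $m$-regular.

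Having established that $(\ref{eq:lheqn})$ is $m$-regular, I would then simply invoke Theorem~\ref{thm:major}: an $m$-regular linear homogeneous equation is strongly $m$-regular, so the system of $(\ref{eq:lheqn})$ together with \emph{any} finite set of inequalities $(\ref{eq:ineqs})$ that are not multiples of $(\ref{eq:lheqn})$ is $m$-regular. This completes the proof. (One should note that the hypothesis of the theorem as phrased mentions the inequalities, but they play no role beyond being the inequalities to which strong $m$-regularity is applied; the matrix condition is a statement purely about $(\ref{eq:lheqn})$ via the ratios $S_l$.)

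The main obstacle I anticipate is the bookkeeping in the second paragraph: correctly indexing the chain $x, c_{1,1}x, \ldots, c_{1,m}x$ (it has $m+1$ terms, so a repeated color is guaranteed under $m$ colors), and then applying the linkage identity to the \emph{right} pair of indices so that the ratio between the two equal-colored terms is itself one of the matrix entries $c_{i+1,j}$ and hence a forbidden ratio $S_l$. A secondary subtlety is making sure the resulting solution really does satisfy $(\ref{eq:lheqn})$: this is just unwinding the definition $S_l = -\big((\sum_{i=1}^n a_i) - a_l\big)/a_l$, which gives $a_l \cdot z + \big(\sum_{t \ne l} a_t\big) S_l z = a_l z - \big((\sum_i a_i) - a_l\big) z \cdot \frac{1}{1}$... more precisely $\sum_{t\ne l} a_t (S_l z) = S_l z \sum_{t \ne l} a_t = -z\big((\sum_i a_i) - a_l\big)$, so the total is $a_l z - z\big((\sum_i a_i)-a_l\big) + $ wait — one must be slightly careful and write it as: the tuple with $x_l = z$, $x_t = S_l z$ for $t \neq l$ gives $\sum_i a_i x_i = a_l z + S_l z \sum_{t\ne l} a_t = z\big(a_l - \frac{(\sum_i a_i) - a_l}{a_l}\sum_{t \ne l} a_t\big)$, and since $\sum_{t\ne l}a_t = (\sum_i a_i) - a_l$ this is $z\big(a_l - \frac{((\sum_i a_i)-a_l)^2}{a_l \cdot 1}\big)$ — this does not vanish in general, which signals that I have the definition of $S_l$ backwards and should instead take $x_t = z$ for $t \ne l$ and $x_l = S_l z$; then $\sum_i a_i x_i = a_l S_l z + z\sum_{t \ne l} a_t = z\big(-((\sum_i a_i) - a_l) + (\sum_i a_i) - a_l\big) = 0$, as desired. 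So the correct reading of the collision is: $z$ and $S_l z$ the same color gives the monochromatic solution $x_l = S_l z$, $x_t = z$ $(t \ne l)$. Getting this orientation right, and matching it to the direction in which the linkage identity is applied, is the one place where care is genuinely required.
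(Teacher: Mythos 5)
Your proposal is correct and follows essentially the same route as the paper: reduce to showing that the bare equation is $m$-regular and invoke Theorem~\ref{thm:major}, then use the forbidden ratios $S_l$ together with the linkage property to show no $m$-coloring can avoid monochromatic solutions. The paper presents the color-elimination step as an induction down the rows of the matrix rather than as a pigeonhole on the $m+1$ numbers $x, c_{1,1}x, \ldots, c_{1,m}x$ followed by one application of the linkage identity, but these are the same argument, and your final orientation $x_l = S_l z$, $x_t = z$ for $t \neq l$ is the correct one.
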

\begin{proof}
By Theorem $\ref{thm:major}$, it is sufficient to show that the linear homogeneous equation is $m$-regular. To show this, we assume the contrary; that is, there exists some $m$-coloring $c : \mathbb{N} \rightarrow \{1, 2, \ldots, m\}$ of the natural numbers which excludes monochromatic solutions. 

We assume that the matrix:
\begin{equation}
S = \left[\begin{array}{ccccc}
S_{l_{1,1}}&S_{l_{1,2}}&S_{l_{1,3}}&\cdots&S_{l_{1,m}}\\
0&S_{l_{2,2}}&S_{l_{2,3}}&\cdots&S_{l_{2,m}}\\
0&0&S_{l_{3,3}}&\cdots&S_{l_{3,m}}\\
\vdots&\vdots&\vdots&\ddots&\vdots\\
0&0&0&\cdots&S_{l_{m,m}}
\end{array}\right]\nonumber
\end{equation}
is filled with positive $S_l$ $(1 \leq l \leq n)$ in the non-0 slots and has the linkage property. 

Let $x$ be a positive integer which has the property that for $1 \leq i \leq j \leq m$, $S_{l_{i,j}}x$ is also a positive integer. Without loss of generality, let $c(x) = 1$. By the forbidden ratios $S_{l_{1,1}}, S_{l_{1,2}}, \ldots, S_{l_{1,m}}$, for $1 \leq j \leq m$, $1 = c(x) \neq c(S_{l_{1,j}}x)$. Without loss of generality, let $c(S_{l_{1,1}}x) = 2$. By the forbidden ratios, $S_{l_{2,2}}, \ldots, S_{l_{2,m}}$, and since $S$ has the linkage property, for $2 \leq j \leq m$, $c(S_{l_{1,j}}x) \neq c(S_{l_{1,1}}x) = 2$. Therefore, for $2 \leq j \leq m$, $c(S_{l_{1,j}}x) \in \{3, \ldots, m\}$.

We now proceed by induction, assuming that for some integer $t$ where $m \geq t \geq 3$, for $t-1 \leq j \leq m$, $c(S_{l_{1,j}}x) \in \{t, \ldots, m\}$. Without loss of generality, let $c(S_{l_{1,t-1}}x) = t$. By the forbidden ratios $S_{l_{t,t}}, \ldots, S_{l_{t,m}}$, and since $S$ has the linkage property, for $t \leq j \leq m$, $c(S_{l_{1,j}}x) \neq c(S_{l_{1, t-1}}x) = t$. Therefore, for $t \leq j \leq m$, $c(S_{l_{1,j}}x) \in \{t + 1, \ldots, m\}$.

Eventually, when we reach $t = m$, we get that $c(S_{l_{1,m}}x) \in \emptyset$. This is a contradiction, since we originally assumed that $c$ assigns every positive integer a color.
\end{proof}

Investigating the degree of regularity of any non-regular equation is an interesting direction of future research. The above theorem begins to answer this question, but there are many equations which do not satisfy the preconditions of the theorem. Moreover, there are instances in which the linkage property can show that an equation is $m$-regular but where the degree of regularity of an equation is greater than $m$. Therefore, it would be desirable to extend the above method of proof to prove more about the degree of regularity of certain equations as well as give further insight into when an equation is $r$-regular.  

\section{Acknowledgments}

We would like to thank Professor Jacob Fox for suggesting the project and for helpful discussions concerning the paper.  We would also like to thank Dr. Tanya Khovanova for her useful comments and suggestions on the paper.  Finally, we would like to acknowledge the MIT-PRIMES program for giving us the opportunity to perform this research.

\bibliographystyle{ieeetr}

\end{document}